\documentclass[12pt]{article}
\usepackage[titletoc,title]{appendix}
\usepackage{amsmath,amsfonts,amssymb,bm,amsthm,amscd}
\usepackage[colorlinks,citecolor=blue,linkcolor=blue]{hyperref}
\numberwithin{equation}{section}
\newtheorem{theorem}{Theorem}[section]

\newtheorem{proposition}[theorem]{Proposition}

\theoremstyle{definition}

\newtheorem{assumption}[theorem]{Assumption}
\newtheorem{remark}[theorem]{Remark}

\newtheorem{question}{Question}

\numberwithin{equation}{section}

\newcommand{\CP}{\mathbb{C}\mathrm{P}}
\newcommand{\RP}{\mathbb{R}\mathrm{P}}
\newcommand{\Gr}{\operatorname{Gr}}
\newcommand{\V}{\operatorname{V}}

\usepackage[T1]{fontenc}
\usepackage[top=1in, bottom=1in, left=1in, right=1in]{geometry}
\setlength{\headsep}{1cm}
\setlength{\footskip}{1cm}
\usepackage{fancyhdr}
\pagestyle{myheadings}
\setlength{\headheight}{15pt}

\pagestyle{fancy}
\fancyhf{}
\lhead{{\bfseries M.~Capoferri}, {\bfseries G.~Rozenblum}, {\bfseries N.~Saveliev} and {\bfseries D.~Vassiliev}}
\rhead{{\bfseries Page \thepage}}
\rfoot{{\bfseries Topological obstructions to diagonalisation}}

\usepackage{enumerate}
\usepackage{xcolor,cancel}
\usepackage{relsize}

\usepackage{tikz}
\usetikzlibrary{arrows.meta}


\allowdisplaybreaks

\begin{document}

\title{Topological obstructions
\\ to the diagonalisation of pseudodifferential systems}
\author{\normalsize Matteo Capoferri\thanks{MC:
School of Mathematics,
Cardiff University,
Senghennydd Rd,
Cardiff CF24~4AG,
UK. \emph{Current address}:
Department of Mathematics, Heriot-Watt University, EH14 4AS Edinburgh, UK;
m.capoferri@hw.ac.uk,
\url{http://mcapoferri.com}.
}
\and
\normalsize Grigori Rozenblum\thanks{GR:
Department of Mathematical Sciences,
Chalmers University of Technology, 
Sweden \emph{and} The Euler International Mathematical Institute, Saint Petersburg \emph{and} Sirius University, Sochi, Russia;
grigori@chalmers.se.
}
\and
\normalsize Nikolai Saveliev\thanks{NS:
Department of Mathematics, 
University of Miami, 
PO Box 249085 Coral Gables, 
FL 33124, USA;
saveliev@math.miami.edu.
}
\and
\normalsize Dmitri Vassiliev\thanks{DV:
Department of Mathematics,
University College London,
Gower Street,
London WC1E~6BT,
UK;
D.Vassiliev@ucl.ac.uk,
\url{http://www.ucl.ac.uk/\~ucahdva/}.
}}

\renewcommand\footnotemark{}


\maketitle
\begin{abstract}
Given a matrix pseudodifferential operator on a smooth manifold, one may be interested in diagonalising it by choosing eigenvectors of its principal symbol in a smooth manner. We show that diagonalisation is not always possible,  on the whole cotangent bundle or even in a single fibre.  We identify global and local topological obstructions to diagonalisation and examine physically meaningful examples demonstrating that all possible scenarios can occur.

\

{\bf Keywords:} pseudodifferential systems,  diagonalisation,  topological obstructions.

\

{\bf 2020 MSC classes: }
primary 
58J40; 
secondary 
35G35, 	
35J46, 	
35J47,  	
35J48. 	

\end{abstract}


\allowdisplaybreaks

\section{Statement of the problem}
\label{Statement of the problem}

Diagonalisation is often a useful approach to \textcolor{black}{recasting} matrix operators appearing in analysis \textcolor{black}{and} mathematical physics in a form that can be more easily analysed.  Its effectiveness can be already appreciated at the level of operators on finite-dimensional vector spaces, where it manifests itself in many guises,  not least in the various formulations of the Spectral Theorem.

For partial differential or, more generally,  pseudodifferential matrix operators on manifolds, the problem of diagonalisation can effectively be reduced to the diagonalisation of the principal symbol of the operator at hand --- a smooth matrix-function on the cotangent bundle ---, and more precisely to the existence of globally defined eigenvectors thereof.  Indeed,  as soon as one can \emph{globally} diagonalise the principal symbol in a smooth manner,  \textcolor{black}{many} approaches to \textcolor{black}{achieving} (block) diagonalisation in various settings are available in the literature \cite{taylor_diag, cordes3,cordes,helffer_sj,BR99,PST03, cordes2, cuenin, diagonalization}.

However, there are, in general, obstructions of topological nature that prevent one from choosing smooth global eigenvectors of the principal symbol. Remarkably, such obstructions may be present even (i) for operators acting on \emph{trivial} vector bundles and (ii) in the cotangent fibre \emph{at a single point} of the base manifold. The goal of the current paper is to examine the issue of topological obstructions and provide necessary and sufficient conditions for the diagonalisation of pseudodifferential \textcolor{black}{matrix operators} on manifolds, in a way that is self-contained and accessible to a wide readership, including researchers with a background in the analysis of PDEs.

The issue of diagonalisation of matrix-functions over topological spaces and its relation with the topology of the underlying space has, of course, been studied before. For example, in 1984 Kadison \cite{kadison} provided an explicit $2\times2$ normal continuous matrix-function on $\mathbb{S}^4$ that is not globally diagonalisable and asked the question of what topological \textcolor{black}{properties} of the underlying space guarantee diagonalisability of a $2\times 2$ normal continuous matrix-function.  The same year,  Grove and Pedersen \cite{grove_pedersen} exhibited a rather exotic class of compact Hausdorff spaces on which the diagonalisability is guaranteed for \emph{all} normal matrix-valued functions. In the same paper, they also showed that all normal matrix-valued functions with simple eigenvalues on a 2-connected compact CW-complex are diagonalisable \cite[Theorem~1.4]{grove_pedersen}.  More recently,  Friedman and Park \cite{friedman_park} took Grove and Pedersen's analysis further,  investigating unitary equivalence classes of normal matrix-functions under the assumption of simple eigenvalues.  

The novelties of our work are as follows: (i) motivated by \textcolor{black}{the} pseudodifferential theory and applications to partial differential operators from mathematical physics \textcolor{black}{and geometry},  we examine the special case of smooth matrix-functions on the cotangent bundle of a manifold $M$; (ii) we formulate the problem in an operator-theoretic framework and in the language of mathematical analysis, thus making the paper accessible to a readership with little or no topological background; (iii) we discuss obstructions both to the global diagonalisation and to the diagonalisation in the cotangent fibre at a single point; (iv) we discuss explicitly numerous physically meaningful examples,  detailing,  for each of them, existence or absence of local and global topological obstructions.

\

Let $M$ be a connected closed \textcolor{black}{oriented} \textcolor{black}{smooth} manifold of dimension $d\ge 2$. Local coordinates on $M$ will be denoted by $x^\alpha\,$, $\alpha=1,\dots,d$, and coordinates in the cotangent fibre $T_x^*M$ by $\xi_\alpha\,$, $\alpha=1,\dots,d$. \textcolor{black}{Throughout the paper we adopt Einstein's summation convention over repeated indices. }

Let $E$ be a trivial $\mathbb{C}^m$--bundle over $M$ with $m\ge 2$. Let $A$ be a 
pseudodifferential operator of order $s\in \mathbb{R}$ acting on \textcolor{black}{the sections of} $E$ and let $A_\mathrm{prin}(x,\xi)$ be its principal symbol. This principal symbol is an invariantly defined $m\times m$ \textcolor{black}{smooth} matrix-function on $T^*M\setminus\{0\}$, positively homogeneous of degree $s$ in $\xi$.

We make the following assumptions.
\begin{assumption}
\label{assumption principal symbol}
The principal symbol $A_\mathrm{prin}(x,\xi)$ of $A$ is Hermitian.
\end{assumption}
\begin{assumption}
\label{assumption simple eigenvalues}
The eigenvalues of $A_\mathrm{prin}(x,\xi)$ are simple for all $(x,\xi)\in T^*M\setminus\{0\}$.
\end{assumption}

\begin{remark}
Let us emphasise that ellipticity, or indeed self-adjointness, are not needed to formulate the main results of this paper, as the examples from Sections~\ref{Local obstructions}--\ref{Neither local nor global obstructions} will demonstrate.  Note that Assumption~\ref{assumption principal symbol} is weaker \textcolor{black}{than} formal self-adjointness \textcolor{black}{of $A$ with respect to the inner product on sections of $E$ defined by
\begin{equation}
\label{inner product}
\langle u,v\rangle
:=
\int_M u^*(x)\,v(x)\,\rho(x)\,dx\,,
\end{equation}
where $\rho: M\to\mathbb{R}$ is some positive smooth density,  the star $*$ indicates Hermitian conjugation and $dx=dx^1\dots dx^d\,$.  Indeed, formal self-adjointness requires additional conditions on the lower order terms of the symbol}.  However,
in applications, including applications to spectral theory, one often assumes that $A$ is symmetric \textcolor{black}{(with respect to \eqref{inner product})} and elliptic,  namely,
\begin{equation*}
\label{ellipticity}
\det A_\mathrm{prin}(x,\xi)\ne0,
\qquad
\forall (x,\xi)\in T^*M\setminus\{0\}.
\end{equation*}
\textcolor{black}{In this case, } $A$ is \textcolor{black}{automatically} self-adjoint in the full operator theoretic sense \textcolor{black}{as} \textcolor{black}{an operator acting from the Sobolev space $H^s(M;E)$ to $L^2(M;E)$ with respect to the inner product \eqref{inner product}.}
This is a special case to which our results apply, subject to the validity of Assumption~\ref{assumption simple eigenvalues}.
\end{remark}

\color{black}
\begin{remark}
\label{remark smoothness eigenvalues}
Assumptions~\ref{assumption principal symbol} and~\ref{assumption simple eigenvalues} are enough to guarantee that the eigenvalues of $A_\mathrm{prin}$ are \emph{smooth} functions on $T^*M\setminus\{0\}$,  see, e.g., \cite[Section~7]{AKLM} and \cite{KMR} --- this is not the case if Assumption~\ref{assumption simple eigenvalues} on the simplicity of eigenvalues is dropped.
\end{remark}
\color{black}

We denote by $h^{(j)}(x,\xi)$ the eigenvalues of $A_\mathrm{prin}(x,\xi)$ and by 
\begin{equation*}
J\subset \mathbb{Z}, \qquad \# J=m,
\end{equation*}
the \textcolor{black}{index set for} $j$\footnote{\textcolor{black}{For the purposes of the current paper, the way in which $J$ is chosen is unimportant. There are, however,  circumstances --- for example when studying the spectrum of elliptic systems --- where it is convenient to choose the set $J$ in a particular way, see, e.g., \cite[Sec.~1]{part1}\cite[Sec.~1]{part2}. }}.
Let us denote by $P^{(j)}(x,\xi)$ the eigenprojection of $A_\mathrm{prin}(x,\xi)$ associated with the eigenvalue $h^{(j)}(x,\xi)$. It is easy to see that, for each $j\in J$, $P^{(j)}$ is a uniquely defined rank 1 (in view of Assumption~\ref{assumption simple eigenvalues}) smooth matrix-function on $T^*M\setminus \{0\}$. 
\textcolor{black}{Note that the eigenvalues $h^{(j)}(x,\xi)$ and the eigenprojections $P^{(j)}(x,\xi)$ are positively homogeneous in momentum $\xi$ of degree $s$ and zero,  respectively.}

It is natural to ask the following questions for each individual $j\in J$.

\begin{question}
\label{Q1}
For a given $x\in M$, can one choose an eigenvector $v^{(j)}(x,\xi)$ of $A_\mathrm{prin}\textcolor{black}{(x,\xi)}$ corresponding to the eigenvalue $h^{(j)}(x,\xi)$ smoothly for all $\xi\in T^*_xM\setminus \{0\}$?
\end{question}

\begin{question}
\label{Q2} 
Suppose Question~\ref{Q1} has an affirmative answer. Can one choose an eigenvector $v^{(j)}(x,\xi)$ of $A_\mathrm{prin}\textcolor{black}{(x,\xi)}$ corresponding to the eigenvalue $h^{(j)}(x,\xi)$ smoothly for all $(x,\xi)\in T^*M\setminus \{0\}$?
\end{question}

\color{black}
The goal of our paper is to answer Questions~\ref{Q1} and~\ref{Q2}. This will be done in full generality in Section~\ref{Main results}.

In Sections~\ref{Local obstructions}--\ref{Neither local nor global obstructions}, we will convert the abstract results of Section~\ref{Main results} into concrete calculations. We shall provide several explicit physically meaningful examples which demonstrate that, when it comes to topological obstructions, all possible scenarios can occur:
\begin{enumerate}[(i)]

\itemsep0em

\item Local obstructions --- massless Dirac operator (subsections~\ref{Massless Dirac operator in 3D} and~\ref{Top proof}) and the operator curl (subsection~\ref{The operator curl}) on a closed oriented 3-manifold;

\item Global obstructions but no local obstructions --- restriction of the massless Dirac operator to a 2-sphere (subsection~\ref{Restriction of the massless Dirac operator to the 2-sphere}) and an artificial example (subsection~\ref{An artificial example});

\item Neither local nor global obstructions ---  linear elasticity operator on an oriented Riemannian 2-manifold (subsection~\ref{Linear elasticity}) and the Neumann--Poincar\'e operator for 3D linear elasticity (subsection~\ref{NP example}). 

\end{enumerate}

Finally, in Section~\ref{Pseudodifferential operators with multiplicities} we will comment on possible generalisations.

\color{black}


\section{Main results}\label{Main results}

In this section we state and prove our main results. We are using \textcolor{black} {the notation} from the previous section.

\color{black}
Let us begin by observing that, even though the bundle $E$ is trivial, the range of the eigenprojection $P^{(j)}$ may define a nontrivial line bundle. Clearly, when there are no topological obstructions to the existence of a global eigenvector $v^{(j)}$, we have
\begin{equation*}
P^{(j)}(x,\xi)=\frac{v^{(j)}(x,\xi)\,[v^{(j)}(x,\xi)]^*}{[v^{(j)}(x,\xi)]^*\,v^{(j)}(x,\xi)}\,.
\end{equation*}
Note that $v^{(j)}$ is only defined up to a local gauge transformation $v^{(j)}\mapsto z^{(j)} v^{(j)}$, where $z^{(j)}:T^*M\setminus\{0\}\to \mathbb{C}^*$ is an arbitrary smooth function\footnote{In agreement with standard terminology in theoretical physics, here by `local' we mean that the value of $z^{(j)}$ depends on $(x,\xi)\in T^*M\setminus\{0\}$. The function $z^{(j)}$ itself is defined globally.}. Irrespective of whether the eigenvector $v^{(j)}$ is defined globally, we have a well-defined smooth map 
\begin{equation}\label{E:cp}
f^{(j)}: T^*M\setminus\{0\}\longrightarrow \CP^{m-1}
\end{equation}
to the complex projective space, sending $(x,\xi)$ to the complex line through the origin in $\mathbb C^m$ spanned by the vector $v^{(j)}(x,\xi)$. This map is  positively homogeneous of degree zero in momentum $\xi$. Choosing a smooth eigenvector $v^{(j)}(x,\xi)$ then amounts to finding a smooth lift of this map with respect to the canonical projection 
\begin{equation*}\label{E:bundle1}
p: \;\mathbb{C}^m\setminus\{0\} \longrightarrow \CP^{m-1}
\end{equation*}
or, after normalising our eigenvectors to have length one, with respect to the projection of the canonical circle bundle
\begin{equation}\label{E:bundle}
p:\;{\mathbb S}^{2m-1} \longrightarrow \CP^{m-1}
\end{equation}
sending a point on the unit sphere ${\mathbb S}^{2m-1} \subset \mathbb C^{m}\setminus\{0\}$ to the complex line through that point. The lift in question is the dotted arrow that makes the following diagram commute:

\begin{equation*}
\begin{tikzpicture}
\draw (4,1) node (a) {$T^* M \setminus \{0\}$};
\draw (8,3.2) node (b) {\quad $\mathbb{S}^{\,2m-1}$};
\draw (8,1) node (c) {$\CP^{m-1}$};
\draw[-{Stealth[length=2mm, width=2mm]},dashed](a)--(b) node {}; 
\draw[-{Stealth[length=2mm, width=2mm]}](a)--(c) node [midway,above](TextNode){$f^{(j)}$};
\draw[-{Stealth[length=2mm, width=2mm]}](b)--(c) node [midway,right](TextNode){$p$};
\end{tikzpicture}
\end{equation*}
\color{black}

\begin{theorem}
\label{main theorem 1}
One can choose an eigenvector $v^{(j)}(x,\xi)$ smoothly for all $(x,\xi)\in T^*M\setminus\{0\}$ if and only if either one of the following two equivalent conditions holds:
\begin{enumerate}
\item[(1)] the map \eqref{E:cp} induces a zero map $H^2 (\CP^{m-1}) \longrightarrow H^2 (T^*M \setminus \{0\})$ in cohomology, or 
\item[(2)] the Euler class $e \in H^2 (\CP^{m-1})$ of the circle bundle \eqref{E:bundle} pulls back to zero via \eqref{E:cp}.
\end{enumerate}
\end{theorem}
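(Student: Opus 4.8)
The plan is to recognise this as the classical problem of sectioning a circle bundle, and to settle it via the classification of principal $S^1$-bundles by $H^2$. Set $N:=T^*M\setminus\{0\}$ and let $Q^{(j)}:=(f^{(j)})^*\mathbb S^{\,2m-1}$ be the pullback along $f^{(j)}$ of the circle bundle \eqref{E:bundle}; this is a smooth principal $S^1$-bundle over $N$. A smooth lift $\tilde f^{(j)}\colon N\to\mathbb S^{\,2m-1}$ of $f^{(j)}$ along $p$ is the same datum as a smooth section of $Q^{(j)}$. Equivalently --- and this is the point of view that connects to the remark preceding the theorem --- $Q^{(j)}$ is the unit circle bundle of the complex line bundle $L^{(j)}:=(f^{(j)})^*\mathcal O(-1)$ whose fibre over $(x,\xi)$ is precisely $\operatorname{ran}P^{(j)}(x,\xi)$, and a lift is nothing but a nowhere-vanishing smooth section of $L^{(j)}$, i.e.\ a global smooth eigenvector $v^{(j)}$. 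So everything reduces to deciding when $Q^{(j)}$ (equivalently $L^{(j)}$) is trivial.

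Next I would invoke standard bundle theory. A principal $S^1$-bundle admits a continuous section if and only if it is trivial; over a space of the homotopy type of a CW complex --- and $N$, being a smooth manifold, deformation retracts onto the compact cosphere bundle $S^*M$, hence qualifies --- isomorphism classes of principal $S^1$-bundles are classified by the Euler class in $H^2(\,\cdot\,;\mathbb Z)$, and a smooth such bundle has a smooth section if and only if it has a continuous one (smoothing of sections of a smooth fibre bundle), so the smoothness in the statement costs nothing. By naturality of the Euler class, $e(Q^{(j)})=(f^{(j)})^*e$, where $e\in H^2(\CP^{m-1};\mathbb Z)$ is the Euler class of \eqref{E:bundle}. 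Hence a smooth lift exists if and only if $Q^{(j)}$ is trivial, if and only if $(f^{(j)})^*e=0$, which is condition~(2). Alternatively, one can run elementary obstruction theory directly on the fibration $p$: the fibre $S^1$ has $\pi_1(S^1)=\mathbb Z$ and $\pi_k(S^1)=0$ for $k\ge 2$, so there is a single obstruction to a lift, living in $H^2(N;\mathbb Z)$, and by naturality it equals $(f^{(j)})^*e$; as all higher obstruction groups vanish, its vanishing is both necessary and sufficient.

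It remains to check that (1)$\Leftrightarrow$(2). Here I would use that $H^2(\CP^{m-1};\mathbb Z)\cong\mathbb Z$ and that $e$ is a generator: indeed \eqref{E:bundle} is the unit circle bundle of the tautological line bundle $\mathcal O(-1)$, whose first Chern class generates $H^2(\CP^{m-1};\mathbb Z)$, and the Euler class of a circle bundle coincides with the first Chern class of its associated complex line bundle. A homomorphism out of the infinite cyclic group $H^2(\CP^{m-1};\mathbb Z)$ is zero precisely when it annihilates a generator; thus $(f^{(j)})^*e=0$ if and only if the induced map $H^2(\CP^{m-1})\to H^2(N)$ is the zero map.

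The genuinely substantive input is the classification of circle bundles by $H^2$ (equivalently, that $\CP^\infty\simeq K(\mathbb Z,2)$), or the corresponding sliver of obstruction theory; the remaining ingredients --- section $\Leftrightarrow$ triviality, naturality of the Euler class, the smooth/continuous comparison, and the CW homotopy type of $N$ --- are routine. Accordingly, the only real care needed is with precisely this circle of standard-but-technical points (in particular the smoothing step and the identification of the lifting problem with a bundle-triviality problem), rather than with anything specific to the pseudodifferential setting.
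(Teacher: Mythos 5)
Your proposal is correct and follows essentially the same route as the paper: pull back the canonical circle bundle along $f^{(j)}$, identify the lift with a section, invoke section~$\Leftrightarrow$~triviality~$\Leftrightarrow$~vanishing Euler class together with naturality, and use that $e$ generates $H^2(\CP^{m-1})$ to get (1)$\Leftrightarrow$(2). The only cosmetic difference is in the smoothness step, where you smooth a continuous section of a smooth bundle while the paper cites results on smooth choices of eigenvectors; both are adequate.
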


\begin{proof}
Finding a lift of the map \eqref{E:cp} is equivalent to finding a section of the principal circle bundle \textcolor{black}{$L$} over  $T^*M \setminus \{0\}$ obtained by pulling back the principal circle bundle \eqref{E:bundle} via the map $f^{(j)}$. As a principal bundle, \textcolor{black}{$L$} admits a section if and only if it is trivial, see \cite[Chapter 4, Corollary 8.3]{Husemoller}. The triviality of \textcolor{black}{$L$} is in turn equivalent to the vanishing of its Euler class,  see \cite[Chapter 20, Remark 6.2]{Husemoller}. The latter class equals the pull-back of the Euler class $e \in H^2 (\CP^{m-1})$ of the canonical bundle \eqref{E:bundle} via $f^{(j)}$.  Since $e$ generates $H^2 (\CP^{m-1})$, the proof \textcolor{black}{of the equivalence is complete.  It remains only to show that the lift is smooth (the topological arguments only guarantee a \emph{continuous} lift); but smoothness follows from the triviality of $E$ and \cite[Section~7]{AKLM},  \cite{KMR}}.
 \end{proof}

\begin{theorem}
\label{main theorem 2}
For a fixed point $x\in M$, one can choose an eigenvector $v^{(j)}(x,\xi)$ smoothly for all $\xi \in T^*_x M\setminus\{0\}$ if and only if 
\begin{enumerate}
\item[(1)] either $d \neq 3$, or
\item[(2)] $d = 3$ and the map $T^*_x M\setminus\{0\} \to \CP^{m-1}$ obtained by restricting the map \eqref{E:cp} to the fiber at $x \in M$ is homotopic to a constant map.
\end{enumerate}
\end{theorem}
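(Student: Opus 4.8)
The plan is to run the argument of Theorem \ref{main theorem 1} fibrewise over the fixed point $x$. The cotangent fibre $T_x^*M\setminus\{0\}$ is diffeomorphic to $\mathbb{R}^d\setminus\{0\}$, which deformation retracts onto the unit sphere $S^{d-1}$; let $g$ denote the restriction of the map \eqref{E:cp} to $T_x^*M\setminus\{0\}$, which up to homotopy is a map $S^{d-1}\to\CP^{m-1}$. Exactly as in the proof of Theorem \ref{main theorem 1}, choosing a smooth eigenvector $v^{(j)}(x,\xi)$ over $T_x^*M\setminus\{0\}$ amounts to finding a section of the principal circle bundle obtained by pulling back \eqref{E:bundle} along $g$; such a section exists if and only if that bundle is trivial, i.e.\ if and only if its Euler class, which equals the pullback $g^*e$ of the generator $e\in H^2(\CP^{m-1})$, vanishes in $H^2(S^{d-1};\mathbb{Z})$. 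As in Theorem \ref{main theorem 1}, a continuous lift so obtained can be taken to be smooth because $E$ is trivial, using \cite[Section~7]{AKLM} and \cite{KMR}.

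I would then split into cases. If $d\neq3$ then, since $d\geq2$, we have $d-1\geq1$ and $d-1\neq2$, so $H^2(S^{d-1};\mathbb{Z})=0$; hence $g^*e=0$ automatically and a (smooth) lift always exists, which is alternative (1). If $d=3$ then $H^2(S^2;\mathbb{Z})\cong\mathbb{Z}$, so a lift exists precisely when $g^*e=0$, and it remains to identify this with $g$ being null-homotopic. For this I would invoke the simple connectivity of $\CP^{m-1}$ (which holds as $m\geq2$): by the Hurewicz theorem the map $\pi_2(\CP^{m-1})\to H_2(\CP^{m-1};\mathbb{Z})\cong\mathbb{Z}$ is an isomorphism and $e$ pairs to $\pm1$ with a generator of $H_2(\CP^{m-1})$, so $g^*e=0$ in $H^2(S^2)$ is equivalent to $g_*[S^2]=0$ in $H_2(\CP^{m-1})$, hence to $[g]=0$ in $\pi_2(\CP^{m-1})$, hence --- by simple connectivity --- to $g$ being homotopic to a constant. (Alternatively and more directly: a null-homotopic $g$ lifts by the homotopy lifting property of \eqref{E:bundle}, while any lift $\tilde g\colon S^2\to S^{2m-1}$ is itself null-homotopic since $\pi_2(S^{2m-1})=0$ for $m\geq2$, whence $g=p\circ\tilde g$ is null-homotopic.) This is alternative (2), and completes the proof.

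I do not anticipate a genuine obstacle: the statement is a fibrewise rerun of Theorem \ref{main theorem 1} combined with elementary homotopy theory of $S^{d-1}$ and $\CP^{m-1}$. The only points needing a little care are the reduction from the non-compact fibre to its homotopy type $S^{d-1}$ (harmless, since triviality of a principal bundle and the existence of a section are homotopy invariants), the low-dimensional bookkeeping that makes $H^2(S^{d-1};\mathbb{Z})=0$ for every $d\geq2$ with $d\neq3$ --- in particular the edge case $d=2$, where one may equally just observe that $g\colon S^1\to\CP^{m-1}$ is null-homotopic --- and the passage from a continuous to a smooth lift, handled precisely as in Theorem \ref{main theorem 1}.
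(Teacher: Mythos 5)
Your proposal is correct, and its overall skeleton coincides with the paper's: run the argument of Theorem~\ref{main theorem 1} fibrewise, retract $T^*_xM\setminus\{0\}$ onto $\mathbb{S}^{d-1}$, observe that $H^2(\mathbb{S}^{d-1})=0$ unless $d=3$, and handle smoothness of the lift exactly as before. The only genuine divergence is in the $d=3$ step, where one must show that a map $g\colon \mathbb{S}^2\to\CP^{m-1}$ with $g^*e=0$ is null-homotopic. The paper does this by cellular approximation (homotoping $g$ into the $2$-skeleton $\CP^1=\mathbb{S}^2$) followed by the Hopf degree theorem, whereas you invoke the Hurewicz isomorphism $\pi_2(\CP^{m-1})\cong H_2(\CP^{m-1})$ together with naturality of the pairing with $e$. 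Both are standard and equally rigorous; your parenthetical alternative --- lift a null-homotopy through \eqref{E:bundle} one way, and use $\pi_2(\mathbb{S}^{2m-1})=0$ for the converse --- is arguably the slickest of the three, since it establishes the equivalence ``lift exists $\iff$ $g$ null-homotopic'' directly, without passing through cohomology at all in the $d=3$ case. No gaps.
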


\begin{proof}
The same argument as in the proof of Theorem \textcolor{black}{\ref{main theorem 1}} shows that, for a given $x \in M$, an eigenvector $v^{(j)}(x,\xi)$ can be chosen smoothly for all $\xi \in T^*_x M\setminus\{0\}$ if and only if the induced map 
\[
H^2(\CP^{m-1}) \longrightarrow H^2(T^*_x M\setminus\{0\})
\]
is zero \textcolor{black}{(smoothness is obtained \emph{a posteriori} by arguing as in the proof of Theorem~\ref{main theorem 1})}. Since $T^*_x M\setminus\{0\}$ has \textcolor{black}{a sphere} $\mathbb{S}^{\,d-1}$ as its deformation retract, the group $H^2 (T^*_x M \setminus\{0\})$ vanishes unless $d = 3$. In the case of $d = 3$, one only needs to show that any map $f: {\mathbb S}^2 \longrightarrow \CP^{m-1}$ inducing a zero homomorphism $H^2 (\CP^{m-1}) \to H^2 (\mathbb S^2)$ is homotopic to a constant map. By the cellular approximation theorem \textcolor{black}{\cite[Theorem~4.8]{hatcher}}, $f$ is homotopic to a map $h: {\mathbb S}^2 \to \CP^1$ into the 2-skeleton of $\CP^{m-1}$ with its standard cellular structure. In particular, $0 = h^*: H^2 (\CP^1) \to H^2 (\mathbb S^2)$. Since $\CP^1 = {\mathbb S^2}$, the result follows from the Hopf theorem classifying the homotopy classes of maps from a sphere to itself by their degree; \textcolor{black}{see \cite[Corollary 4.25]{hatcher}.}
\end{proof}

It is worth emphasising that Theorem~\ref{main theorem 2} singles out dimension $d=3$ as special.  This is very relevant in applications, as dimension three is the natural setting of a large number of physically meaningful operators.

\color{black}
\begin{remark}
The existence or absence of obstructions is checked for each eigenvalue $h^{(j)}$, $j\in J$,  independently.  Suppose that the eigenvalues $h^{(l)}$ and $h^{(k)}$, $l\ne k$, are unobstructed. Then the choice of a smooth global eigenvector $v^{(l)}$ is not affected by and does not affect the choice of a smooth global eigenvector $v^{(k)}$.  As soon as one can choose smooth global eigenvectors $v^{(j)}$ for all $j\in J$,  the results from \cite{diagonalization} provide an explicit algorithm for the construction of the full symbol of a pseudodifferential operator $U$ such that $U^*AU$ is diagonal, i.e., the direct sum of $m$ scalar operators acting in $L^2(M)$, modulo an integral operator with infinitely smooth kernel.
\end{remark}

\begin{remark}
Some further comments are in order on the importance of achieving a \emph{global} diagonalisation.  Indeed,  in cases where Question~\ref{Q2} has a negative answer,  one may still pursue a local (or even microlocal) diagonalisation of the operator $A$. 
Unfortunately,  most of the time the latter is of little or no use in applications to, for example, spectral theory.  It was shown in \cite{diagonalization} that, in the absence of topological obstructions,  the spectrum of an operator $A$ of positive order is asymptotically well approximated by the union of the spectra of the scalar elliptic operators appearing on the `diagonal' of $U^*AU$, up to a superpolynomial error.
No such results can be established by means of mere local diagonalisation. There are,  however,  limited instances where one only needs a local diagonalisation for the spectral analysis, see for example \cite{grigori1}; this justifies addressing Questions~\ref{Q1} and~\ref{Q2} separately.

When available, diagonalisability of a system substantially simplifies the spectral analysis \cite{NP} and the construction of evolution operators \cite{dirac, lordir} by reducing the system to scalar operators. However, diagonalising a system may not be possible due to topological obstructions, as is the case for some important physically meaningful operators; see Sections~\ref{Local obstructions}--\ref{Neither local nor global obstructions}. For this reason, other approaches to \textcolor{black}{the study of} the spectrum of systems, such as the use of pseudodifferential projections \cite{part1, part2}, are perhaps more natural, in that they always work and circumvent topological obstructions altogether.
 \end{remark}

\color{black}



\section{\color{black}Examples: local obstructions}
\label{Local obstructions}

\subsection{Massless Dirac operator in 3D}
\label{Massless Dirac operator in 3D}

Let $(M,g)$ be a closed oriented Riemannian 3-manifold. We denote by $\nabla$ the Levi--Civita connection, by $\Gamma^\alpha{}_{\beta\gamma}$ the Christoffel symbols, and by $\rho(x):=\sqrt{\det g_{\alpha\beta}}$ the Riemannian density.

Let $\{e_j\}_{j=1}^3$ be a positively oriented global framing of $M$, namely,  a set of three orthonormal smooth vector fields on $M$, whose orientation agrees with that of $M$. Recall that such a global framing exists because all orientable 3-manifolds are parallelizable \cite{Stiefel, Kirby}. In chosen local coordinates $x^\alpha$, $\alpha=1,2,3$,  we will denote by $e_j{}^\alpha$ the $\alpha$-th component of the $j$-th vector field.  Let
\begin{equation*}
\sigma^\alpha(x):= \sum_{j=1}^3 s^j \,e_j{}^\alpha(x)
\end{equation*}
be the projection of the standard Pauli matrices
\smallskip\noindent
\begin{equation}\label{Pauli matrices basic}
s^1:=
\begin{pmatrix}
0&1\\
1&0
\end{pmatrix}
=s_1
\,,
\quad
s^2:=
\begin{pmatrix}
0&-i\\
i&0
\end{pmatrix}
=s_2
\,,
\quad
s^3:=
\begin{pmatrix}
1&0\\
0&-1
\end{pmatrix}
=s_3
\end{equation}
along our framing.

The massless Dirac operator acting on the sections of a trivial $\mathbb{C}^2$--bundle over $M$ is the $2\times 2$ differential operator defined by
\begin{equation}
\label{massless dirac definition equation}
W:=-i\sigma^\alpha
\left(
\frac\partial{\partial x^\alpha}
+\frac14\sigma_\beta
\left(
\frac{\partial\sigma^\beta}{\partial x^\alpha}
+\Gamma^\beta{}_{\alpha\gamma}\,\sigma^\gamma
\right)
\right)
:H^1(M;\mathbb{C}^2)\to L^2(M;\mathbb{C}^2).
\end{equation}
The operator \eqref{massless dirac definition equation} is an elliptic self-adjoint differential operator of order $1$. Its principal symbol reads
\begin{equation}
\label{principal symbol Dirac}
W_\mathrm{prin}(x,\xi)=\sigma^\alpha(x)\,\xi_\alpha.
\end{equation}
Furthermore,  a straightforward calculation involving elementary properties of Pauli matrices gives us
the eigenvalues 
\begin{equation*}
h^{(\pm)}(x,\xi)=\pm \sqrt{g^{\alpha\beta}(x)\xi_\alpha\xi_\beta}\,.
\end{equation*}
Hence,  the operator $W$ satisfies Assumption~\ref{assumption simple eigenvalues}.

\begin{proposition}
\label{proposition Dirac 3D}
Fix a point $x\in M$. It is impossible to choose eigenvectors $v^{(\pm)}(x,\xi)$ of \eqref{principal symbol Dirac} smoothly for all $\xi\in T^*_x M\setminus\{0\}$.
\end{proposition}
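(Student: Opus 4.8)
The plan is to reduce the statement to Theorem~\ref{main theorem 2}. Here $d=3$ and $m=2$, so that theorem tells us an eigenvector $v^{(\pm)}(x,\xi)$ can be chosen smoothly for all $\xi\in T^*_xM\setminus\{0\}$ if and only if the map $T^*_xM\setminus\{0\}\to\CP^1$ obtained by restricting $f^{(\pm)}$ (see \eqref{E:cp}) to the fibre over $x$ is homotopic to a constant; hence it suffices to show that this restricted map is \emph{not} null-homotopic. As a first step I would put the principal symbol \eqref{principal symbol Dirac} into standard form at the fixed point $x$: since $\{e_j\}_{j=1}^3$ is an orthonormal framing, the assignment $\eta_j:=e_j{}^\alpha(x)\,\xi_\alpha$ defines a linear isomorphism $T^*_xM\xrightarrow{\ \sim\ }\mathbb R^3$ under which $W_\mathrm{prin}(x,\xi)=\sum_{j=1}^3\eta_j\,s^j$. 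Because the eigenvectors are positively homogeneous of degree zero in $\xi$, the restricted map descends to the unit sphere $\mathbb S^2=\{\,|\boldsymbol{\eta}|=1\,\}$, which is a deformation retract of $T^*_xM\setminus\{0\}$; so we are left with a map $\mathbb S^2\to\CP^1$.

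Next I would identify this map explicitly. For a unit vector $\mathbf n=(n_1,n_2,n_3)\in\mathbb S^2$ the Hermitian matrix $\sum_j n_j s^j=\begin{pmatrix} n_3 & n_1-in_2\\ n_1+in_2 & -n_3\end{pmatrix}$ has eigenvalues $\pm1$, and a short computation shows that $v^{(+)}(\mathbf n):=(1+n_3,\ n_1+in_2)^{\top}$ is an eigenvector for the eigenvalue $+1$ (away from the south pole), while $v^{(-)}(\mathbf n):=v^{(+)}(-\mathbf n)$ is an eigenvector for $-1$. In the affine chart $[1:w]$ of $\CP^1$ the map $f^{(+)}$ therefore reads $\mathbf n\mapsto w=(n_1+in_2)/(1+n_3)$, which is precisely the stereographic projection of $\mathbb S^2$ from the south pole followed by the standard embedding $\mathbb C\hookrightarrow\CP^1$; hence $f^{(+)}\colon\mathbb S^2\to\CP^1\cong\mathbb S^2$ is a diffeomorphism and, in particular, has degree $\pm1$. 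Since $f^{(-)}=f^{(+)}\circ(-\mathrm{id})$ and the antipodal map of $\mathbb S^2$ has degree $-1$, the map $f^{(-)}$ has degree $\mp1$. In either case the degree is nonzero, so by the Hopf classification of self-maps of $\mathbb S^2$ neither $f^{(+)}$ nor $f^{(-)}$ is homotopic to a constant; thus condition~(2) of Theorem~\ref{main theorem 2} fails, and the proposition follows.

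The only step that is not purely formal is the degree computation, i.e.\ verifying that $f^{(\pm)}$ is genuinely a degree-$(\pm1)$ self-map of $\mathbb S^2$ rather than, say, null-homotopic; I expect this to be the crux, and would settle it by the explicit chart computation sketched above. Equivalently — and this is the conceptual content — the pull-back $(f^{(\pm)})^*e$ of the generator $e\in H^2(\CP^1)$ is $\mp$ the generator of $H^2(\mathbb S^2)$, so the rank-one eigenbundle $\operatorname{ran}P^{(\pm)}$ over $\mathbb S^2$ is the degree-$(\mp1)$ monopole line bundle, which admits no nowhere-vanishing section. This is the familiar topological mechanism behind the nonexistence of a global smooth eigenvector, here localised to a single cotangent fibre.
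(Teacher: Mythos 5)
Your proof is correct, but it takes a different route from the one the paper gives for this proposition. The paper's own proof is an analytic contradiction argument: assuming a smooth normalised eigenvector $\widetilde v^{(+)}(\xi)$ exists on the whole fibre, it forms the isotropic vector $u^\alpha=[\widetilde v^{(+)}]^{\top}\epsilon\, s^\alpha\, \widetilde v^{(+)}$ with the metric spinor $\epsilon$, checks that $w=\operatorname{Re}u$ is a nowhere-vanishing field tangent to the unit sphere $|\xi|=1$, and invokes the hairy ball theorem. You instead apply Theorem~\ref{main theorem 2} directly, identifying the restricted map $\mathbb S^2\to\CP^1$ explicitly as (the extension of) stereographic projection via the eigenvector $(1+n_3,\,n_1+in_2)^{\top}$, and concluding it has degree $\pm1$, hence is not null-homotopic. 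Your computations check out: the eigenvector formula is correct, the chart expression $(n_1+in_2)/(1+n_3)$ is indeed stereographic projection, and the degree of $f^{(-)}$ follows from composing with the antipodal map. The paper does contain a second, ``topological'' proof in the subsection immediately following, which is the conceptual twin of yours — it identifies the restricted map with a quotient of the Hopf map $SU(2)\to\CP^1$ and shows it is a homeomorphism — but that version is stated only for $M=\mathbb S^3$; your coordinate computation, using the framing at the single point $x$, works verbatim for an arbitrary closed oriented parallelizable 3-manifold and so matches the generality of the proposition. What each approach buys: the paper's hairy-ball argument is self-contained and elementary (it does not presuppose Theorem~\ref{main theorem 2} or degree theory beyond the hairy ball theorem itself), whereas yours pins down the obstruction exactly — the eigenline bundle over the fibre sphere is the degree-$(\mp1)$ monopole bundle — which is more informative and aligns with the general machinery of Section~\ref{Main results}.
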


\begin{proof}
Let us choose geodesic normal coordinates centred at $x \in M$ in such a way that $e_j{}^\alpha(0) = \delta_j{}^\alpha$.  The latter can always be achieved by a rigid rotation of the coordinate system.  

Arguing by contradiction,  suppose $\widetilde v^{(+)}(\xi):=v^{(+)}(0,\xi)$ is defined for all $\xi$ in a smooth manner.  The eigenvectors $\widetilde v^{(+)}(\xi)$ are normalised, $[\widetilde v^{(+)}(\xi)]^*\widetilde v^{(+)}(\xi)=1$, and satisfy
\begin{equation}
\label{example 3D Dirac equation 1}
s^\alpha\xi_\alpha \, \widetilde v^{(+)}=|\xi| \,\widetilde v^{(+)},
\end{equation}
where $|\xi|$ is the Euclidean norm.  Multiplying \eqref{example 3D Dirac equation 1} by $[\widetilde v^{(+)}]^{\top} \epsilon$ \textcolor{black}{from the left}, where $\epsilon$ \textcolor{black}{is} the \textcolor{black}{`}metric\textcolor{black}{'} spinor
\begin{equation}
\label{example 3D Dirac equation 2}
\epsilon:=\begin{pmatrix}
0 & 1\\
-1 & 0
\end{pmatrix},
\end{equation}
cf.~\cite[Appendix~A.2]{diffeo},
we obtain
\begin{equation}
\label{example 3D Dirac equation 3}
u^\alpha \xi_\alpha=0,
\end{equation}
with $u^\alpha:=[\widetilde v^{(+)}]^{\top} \,\epsilon s^\alpha\, \widetilde v^{(+)}$.
The complex 3-vector $u$ is isotropic,
\begin{equation*}
\label{example 3D Dirac equation 4}
u^{\top} u=0,
\end{equation*}
see~\cite[Chapter III, Section I]{cartan},  has norm
\begin{equation}
\label{example 3D Dirac equation 5}
\|u\|=\sqrt{2}
\end{equation}
and is invariant under rigid rotations of the normal coordinate system.
Put $w:=\operatorname{Re}u$. Then formulae \eqref{example 3D Dirac equation 3}--\eqref{example 3D Dirac equation 5} imply
\begin{equation*}
\label{example 3D Dirac equation 6}
w^\alpha\xi_\alpha=0,  \qquad \|w\|=1.
\end{equation*}
This formula provides a nowhere zero tangent vector field $w$ on the 2-sphere $|\xi|=1$, which contradicts the hairy ball theorem; see for instance \cite[Theorem 2.28]{hatcher}.
\end{proof}

\color{black}
\subsection{A topological proof}\label{Top proof}
The contradiction argument above provides an analytic proof for the failure of the `topological' condition (2) from Theorem~\ref{main theorem 2} for the Dirac operator in 3D.  In this subsection we give an alternative proof of Proposition~\ref{proposition Dirac 3D}
\color{black}
for the special the case $M=\mathbb{S}^3$,  \textcolor{black}{one} relying directly on Theorem~\ref{main theorem 2}. 

View the round $\mathbb{S}^3$ as the Lie group $SU(2)$ with the bi-invariant metric and the unique spin structure. Identify $T_{\textcolor{black}{E}} (SU(2))$ with the Lie algebra $\mathfrak {su}(2)$ and the unit sphere \textcolor{black}{${\mathbb S}_E (SU(2)) \subset T_E (SU(2))\setminus\{0\}$} with the conjugacy class of zero-trace matrices in $SU(2)$. \textcolor{black}{Here $E \in SU(2)$ is the identity matrix in the Lie group $SU(2)$.} Every matrix in ${\mathbb S}_E(SU(2))$ is of the form 
\begin{equation}\label{E:conj}
\textcolor{black}{B}\, 
\begin{pmatrix}
-i & 0 \\
\;\; 0 & i 
\end{pmatrix}
\textcolor{black}{B}^{-1}
\end{equation}
for some $\textcolor{black}{B} \in SU(2)$ defined uniquely up to the right multiplication by a matrix in $U(1) \subset SU(2)$. This provides for the identification $\textcolor{black}{{\mathbb S}_E}(SU(2)) = SU(2)/U(1)$. Now, the \textcolor{black}{principal} symbol of the Dirac operator at $(\textcolor{black}{E},\zeta) \in \textcolor{black}{{\mathbb S}_E}(SU(2))$ is given by the Clifford multiplication $i\zeta: \mathbb C^2 \to \mathbb C^2$. It is clear from the above description of $\textcolor{black}{{\mathbb S}_E} (SU(2))$ that the eigenvalues of $i\zeta$ are $\lambda = \pm 1$. Let $\lambda = 1$; \textcolor{black}{the case of $\lambda = -1$ is similar.} Then the map \eqref{E:cp} sends the matrix \eqref{E:conj} to the equivalence class of the vector 
\[
\textcolor{black}{B}
\begin{pmatrix}
1 \\
0
\end{pmatrix}
\]
in $\CP^1$. With the identification $\textcolor{black}{{\mathbb S}_E} (SU(2)) = SU(2)/U(1)$, one can easily check that this map is  obtained from the Hopf map $SU(2) \to \CP^1$ by factoring out $U(1) \subset SU(2)$. \textcolor{black}{Therefore, the restriction of \eqref{E:cp} to ${\mathbb S}_E(SU(2))$ is a homeomorphism and, in particular, \eqref{E:cp} is not homotopic to a constant map.}

\subsection{The operator curl}
\label{The operator curl}

Let $(M,g)$ be a closed oriented Riemannian 3-manifold.  Let $\Omega^k(M)$ be the Hilbert space of real-valued $k$-forms over $M$, $k=1,2$.  We define the operator curl as
\begin{equation*}
\operatorname{curl}:= \ast d : \Omega^1(M) \to \Omega^1(M),
\end{equation*}
where $\ast$ is the Hodge dual and $d$ denotes the exterior derivative.  Note that $T^*M$ is trivial, see second paragraph in subsection~\ref{Massless Dirac operator in 3D}.

The operator curl is formally self-adjoint with respect to the natural inner product on $\Omega^1(M)$,  but not elliptic.  Indeed,  its principal symbol reads
\begin{equation}
\label{principal symbol curl}
[(\operatorname{curl})_\mathrm{prin}]_\alpha{}^\beta(x,\xi)= -i\,E_\alpha{}^{\beta\gamma}(x)\,\xi_\gamma\,,
\end{equation}
where the tensor $E$ is defined in accordance with
\begin{equation*}
E_{\alpha\beta\gamma}(x):=\rho(x)\,\varepsilon_{\alpha\beta\gamma},
\end{equation*}
$\rho$ being the Riemannian density and $\varepsilon$ the total antisymmetric symbol, $\varepsilon_{123}=+1$.
An elementary calculation tells us that $(\operatorname{curl})_\mathrm{prin}$ has the simple eigenvalues
\begin{equation*}
h^{(0)}(x,\xi)=0, \qquad h^{(\pm)}(x,\xi)=\pm\sqrt{g^{\mu\nu}(x)\,\xi_\mu\xi_\nu}=:\pm\|\xi\|_{g(x)}\,.
\end{equation*}
It follows that
\begin{equation*}
\det(\operatorname{curl})_\mathrm{prin}=0.
\end{equation*}

\begin{proposition}
\label{proposition local obstructions curl}
Fix a point $x\in M$. It is impossible to choose eigenvectors $v^{(\pm)}(x,\xi)$ of \eqref{principal symbol curl} smoothly for all $\xi\in T^*_x M\setminus\{0\}$.
\end{proposition}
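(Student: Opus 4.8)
The plan is to reduce Proposition~\ref{proposition local obstructions curl} to Proposition~\ref{proposition Dirac 3D}, or rather to mimic its proof, by exhibiting the eigenvectors of $(\operatorname{curl})_\mathrm{prin}$ explicitly and showing they give rise to a nowhere-vanishing tangent vector field on a $2$-sphere. First I would fix, as in the Dirac case, geodesic normal coordinates centred at $x$ in which $g_{\alpha\beta}(0)=\delta_{\alpha\beta}$ and $\rho(0)=1$; then at the point $x$ the principal symbol \eqref{principal symbol curl} becomes the matrix $[(\operatorname{curl})_\mathrm{prin}]_\alpha{}^\beta=-i\,\varepsilon_{\alpha\beta\gamma}\,\xi_\gamma$, i.e.\ up to sign the operator $\eta\mapsto -i\,\xi\times\eta$ on $\mathbb{C}^3$, where $\times$ is the Euclidean cross product. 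Normalising to $|\xi|=1$, the eigenvector equation for $h^{(+)}=+1$ reads $-i\,\xi\times v^{(+)}=v^{(+)}$.

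The key observation is that, for $|\xi|=1$, the eigenspace of $-i\,(\xi\times\,\cdot\,)$ for the eigenvalue $+1$ is spanned by $v^{(+)}(\xi)=a(\xi)+i\,(\xi\times a(\xi))$ for any fixed vector $a$ not parallel to $\xi$, and more invariantly by any complex vector of the form $w+i(\xi\times w)$ with $w\perp\xi$ real and $w\times(\xi\times w)$ suitably oriented. So suppose, arguing by contradiction, that $\widetilde v^{(+)}(\xi):=v^{(+)}(0,\xi)$ can be chosen smoothly and normalised for all $\xi$ with $|\xi|=1$. I would write $\widetilde v^{(+)}(\xi)=p(\xi)+i\,q(\xi)$ with $p=\operatorname{Re}\widetilde v^{(+)}$, $q=\operatorname{Im}\widetilde v^{(+)}$ real vector fields on $\mathbb{S}^2$, and extract from the eigenvector equation $-i\,\xi\times(p+iq)=p+iq$, i.e.\ $\xi\times q=p$ and $-\,\xi\times p=q$, the relations $p\perp\xi$, $q\perp\xi$, $p\perp q$, and $|p|=|q|$. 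Since $\widetilde v^{(+)}$ is normalised, $|p|^2+|q|^2=1$, hence $|p|=|q|=1/\sqrt2$. Thus $w:=p/|p|=\sqrt2\,p$ is a smooth unit tangent vector field on the round $2$-sphere $|\xi|=1$ — indeed one must check $w$ is nowhere zero, which is automatic since $|w|=1$ — and this contradicts the hairy ball theorem, exactly as in the proof of Proposition~\ref{proposition Dirac 3D}; see \cite[Theorem~2.28]{hatcher}. The eigenvalue $h^{(-)}=-1$ is handled identically, with the roles of $p$ and $q$ swapped in sign.

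The main obstacle I anticipate is bookkeeping rather than conceptual: one must be careful that the normal-coordinate reduction is legitimate (the eigenvectors of $(\operatorname{curl})_\mathrm{prin}(0,\xi)$ in the chosen coordinates are genuinely eigenvectors of the invariant principal symbol at $x$, up to the metric identification $T_xM\cong T^*_xM$, which is an isometry at the centre of normal coordinates), and that the cross-product identities yielding $p\perp q$ and $|p|=|q|$ are derived correctly from the complex eigenvalue equation. A secondary subtlety is that $\widetilde v^{(+)}$ is only determined up to the gauge $\widetilde v^{(+)}\mapsto z(\xi)\widetilde v^{(+)}$, $z:\mathbb{S}^2\to\mathbb{C}^*$; but the argument only uses that \emph{some} smooth normalised choice exists, and the contradiction is obtained from the real part $w$ of whatever choice is made. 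Alternatively — and perhaps more cleanly — one could invoke Theorem~\ref{main theorem 2}: it suffices to show that the restriction of the map \eqref{E:cp} to $T^*_xM\setminus\{0\}\simeq\mathbb{S}^2$ is not null-homotopic, and in fact the explicit formula $\xi\mapsto[\,\xi^{(1)}+i\xi^{(2)}\,:\,\ldots\,]$ identifies it (after a rotation sending $\xi$ to the north pole) with a generator of $\pi_2(\CP^1)\cong\mathbb{Z}$, i.e.\ with the standard degree-one map $\mathbb{S}^2=\CP^1\to\CP^1$; this reproves the hairy-ball contradiction topologically and parallels subsection~\ref{Top proof}. I would present the analytic version as the primary proof and remark on the topological one.
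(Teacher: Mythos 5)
Your argument is correct and is essentially the paper's proof: both extract from the eigenvalue equation that the real part of the eigenvector is a nowhere-vanishing vector field of constant length $1/\sqrt{2}$ tangent to the unit sphere in $T^*_xM$, and conclude by the hairy ball theorem. The only difference is presentational --- the paper works invariantly, deducing $|{\rm Re}\,v|^2=1/2$ from the antisymmetry of the symbol (isotropy of $v$) and orthogonality to $\xi$ by contracting the eigenvalue equation with $E^{\alpha\mu\nu}\xi_\mu v_\nu$, whereas you obtain the same relations from the explicit cross-product form in normal coordinates.
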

\begin{proof}
Arguing by contradiction, suppose one can choose normalised eigenvectors $v^{(\pm)}(x,\xi)$ of \eqref{principal symbol curl} smoothly for all $\xi\in T^*_x M\setminus\{0\}$,
\begin{equation}
\label{proof local obstructions curl eqution 1}
g^{\alpha\beta}(x)\,\overline{[v^{(\pm)}(x,\xi)]_\alpha}\, [v^{(\pm)}(x,\xi)]_\beta=1.
\end{equation}
Here the overline denotes complex conjugation.

Let us begin by observing that $(\operatorname{curl})_\mathrm{prin}$ is antisymmetric, that is,
\begin{equation*}
\label{proof local obstructions curl eqution 2}
g_{\gamma\beta}(x)\,[(\operatorname{curl})_\mathrm{prin}]_\alpha{}^\gamma(x,\xi)=-g_{\gamma\alpha}(x)\,[(\operatorname{curl})_\mathrm{prin}]_\beta{}^\gamma(x,\xi).
\end{equation*}
It ensues that $v^{(\pm)}(x,\xi)$ are complex isotropic 3-vectors:
\begin{equation}
\label{proof local obstructions curl eqution 3}
g^{\alpha\beta}(x)\,[v^{(\pm)}(x,\xi)]_\alpha\, [v^{(\pm)}(x,\xi)]_\beta =0,
\end{equation}
compare with \eqref{proof local obstructions curl eqution 1}. Put
\begin{equation*}
\label{proof local obstructions curl eqution 4}
w^{(\pm)}(\xi):=\operatorname{Re} v^{(\pm)}(x,\xi).
\end{equation*}
 Formulae \eqref{proof local obstructions curl eqution 1} and \eqref{proof local obstructions curl eqution 3} imply
\begin{equation}
\label{proof local obstructions curl eqution 5}
\overline{[w^{(\pm)}(\xi)]_\alpha} \,[w^{(\pm)}(\xi)]^\alpha=1/2.
\end{equation}
Multiplying the eigenvalue equation
\begin{equation*}
\label{proof local obstructions curl eqution 6}
-i\,E_\alpha{}^{\beta\gamma}(x)\,\xi_\gamma [v^{(\pm)}(x,\xi)]_\beta=\pm \|\xi\|_{g(x)}\,[v^{(\pm)}(x,\xi)]_\alpha
\end{equation*}
for $(\operatorname{curl})_\mathrm{prin}$ (recall~\eqref{principal symbol curl}) by 
$
E^{\alpha\mu\nu}\xi_\mu(x) [v^{(\pm)}(x,\xi)]_\nu
$
one obtains
\begin{equation*}
\label{proof local obstructions curl eqution 7}
\xi^\beta \, [v^{(\pm)}(x,\xi)]_\beta=0,
\end{equation*}
which,  in turn, yields
\begin{equation}
\label{proof local obstructions curl eqution 8}
\xi^\beta \,[w^{(\pm)}(\xi)]_\beta=0.
\end{equation}

Formulae \eqref{proof local obstructions curl eqution 5} and \eqref{proof local obstructions curl eqution 8} imply that  $w^{(\pm)}$ are, modulo scaling,  nowhere vanishing real vector fields tangent to the 2-sphere.  This contradicts the hairy ball theorem.
\end{proof}

\section{\color{black}Examples: global obstructions but no local obstructions}
\label{Global obstructions but no local obstructions}

\subsection{Restriction of the massless Dirac operator to the 2-sphere}
\label{Restriction of the massless Dirac operator to the 2-sphere}

Let $\mathbf{x}^\alpha$,  $\alpha=1,2,3$,  be the Euclidean coordinates in $\mathbb{R}^3$ and consider the massless Dirac operator $\mathbf{W}$ on $\mathbb{R}^3$ associated with the framing $\mathbf{e}_j{}^\alpha(\mathbf{x})=\mathbf{\delta}_j{}^\alpha$, $j,\alpha=1,2,3$,
\begin{equation}
\label{example 2D Dirac equation 1}
\mathbf{W}=-i s^\alpha
\frac\partial{\partial \mathbf{x}^\alpha}.
\end{equation}
Here the $s^\alpha$ are the standard Pauli matrices \eqref{Pauli matrices basic}.

Let $W$ be the restriction of \eqref{example 2D Dirac equation 1} to 
\begin{equation*}
\label{example 2D Dirac equation 2}
M=\mathbb{S}^2:=\{\mathbf{x}\in \mathbb{R}^3 \ |\ (\textbf{x}^1)^2+(\textbf{x}^2)^2+(\textbf{x}^3)^2=1 \}
\end{equation*}
equipped with the standard round metric. Throughout this subsection, we \textcolor{black}{use bold font to denote} quantities living in $\mathbb{R}^3$, to distinguish them from quantities living on $\mathbb{S}^2$.

The principal symbol of $W$ can be written explicitly in terms of 3-dimensional quantities as
\begin{equation}
\label{example 2D Dirac equation 3}
W_\mathrm{prin}(\mathbf{x}, \boldsymbol{\xi})=
\begin{pmatrix}
\boldsymbol{\xi}_3
&
\boldsymbol{\xi}_1-i\boldsymbol{\xi}_2
\\
\boldsymbol{\xi}_1+i\boldsymbol{\xi}_2
&
-\boldsymbol{\xi}_3
\end{pmatrix},
\end{equation}
\textcolor{black}{where $\mathbf{x}$ and $\boldsymbol{\xi}$ are subject to the conditions}
\begin{gather}
\textcolor{black}{(\textbf{x}^1)^2+(\textbf{x}^2)^2+(\textbf{x}^3)^2=1}, \label{example 2D Dirac equation 4} \\
\textcolor{black}{\textbf{x}^1\boldsymbol{\xi}_1\,+\,\textbf{x}^2\boldsymbol{\xi}_2\,+\,\textbf{x}^3\boldsymbol{\xi}_3 = 0.} \label{example 2D Dirac equation 5}
\end{gather}

\begin{proposition}\label{proposition 2D dirac}
It is impossible to choose eigenvectors $v^{(\pm)}(\mathbf{x},\boldsymbol{\xi})$ of \eqref{example 2D Dirac equation 3} smooth\-ly for all $(\mathbf{x},\boldsymbol{\xi}) \in T^*\mathbb{S}^2\setminus\{0\}$.
\end{proposition}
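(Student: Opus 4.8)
The plan is to deduce the statement from Theorem~\ref{main theorem 1}: I must show that the map $f^{(\pm)}\colon T^*\mathbb{S}^2\setminus\{0\}\to\CP^1$ of \eqref{E:cp} induces a \emph{nonzero} homomorphism $H^2(\CP^1)\to H^2(T^*\mathbb{S}^2\setminus\{0\})$. First I would use that $f^{(\pm)}$ is homogeneous of degree zero in $\boldsymbol\xi$, so that it factors through the unit cotangent bundle $S^*\mathbb{S}^2$, which is a deformation retract of $T^*\mathbb{S}^2\setminus\{0\}$; hence it suffices to treat $f^{(\pm)}\colon S^*\mathbb{S}^2\to\CP^1$. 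Using the constraints \eqref{example 2D Dirac equation 4}--\eqref{example 2D Dirac equation 5}, I would identify $S^*\mathbb{S}^2$ with the manifold of orthonormal $2$-frames $\{(\mathbf x,\boldsymbol\xi)\in\mathbb{S}^2\times\mathbb{S}^2:\mathbf x\cdot\boldsymbol\xi=0\}$, i.e.\ with $SO(3)\cong\RP^3$; in particular $H^2(S^*\mathbb{S}^2)\cong\mathbb Z/2$.

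Next I would identify the homotopy class of $f^{(\pm)}$ explicitly. Writing $\hat{\boldsymbol\xi}=\boldsymbol\xi/\|\boldsymbol\xi\|$, the eigenprojections of \eqref{example 2D Dirac equation 3} are $P^{(\pm)}=\tfrac12(I\pm s^\alpha\hat{\boldsymbol\xi}_\alpha)$. Under the identification $\CP^1=\mathbb{S}^2$ attached to the Hopf bundle \eqref{E:bundle} (the Bloch-sphere picture), the line $\operatorname{ran}P^{(\pm)}$ corresponds to the point $\pm\hat{\boldsymbol\xi}\in\mathbb{S}^2$, because $v^*s^\alpha v=\operatorname{tr}(P^{(\pm)}s^\alpha)=\pm\hat{\boldsymbol\xi}^\alpha$ for any unit vector $v$ spanning $\operatorname{ran}P^{(\pm)}$. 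Consequently, on $S^*\mathbb{S}^2$ the map $f^{(+)}$ is exactly the second-factor projection $(\mathbf x,\boldsymbol\xi)\mapsto\boldsymbol\xi$, which is the projection $\pi\colon S^*\mathbb{S}^2\to\mathbb{S}^2=\CP^1$ exhibiting $S^*\mathbb{S}^2$ as the unit tangent bundle of $\mathbb{S}^2$ (the fibre over $\boldsymbol\xi$ being the unit circle in $\boldsymbol\xi^\perp=T_{\boldsymbol\xi}\mathbb{S}^2$), while $f^{(-)}$ is $\pi$ postcomposed with the antipodal self-diffeomorphism of $\mathbb{S}^2$, which acts invertibly on $H^2$.

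It then remains to check that $\pi^*\colon H^2(\mathbb{S}^2)\to H^2(S^*\mathbb{S}^2)$ is nonzero. The Euler class $e$ of the circle bundle $\pi$ equals $\chi(\mathbb{S}^2)=2$ times a generator of $H^2(\mathbb{S}^2)\cong\mathbb Z$, and the Gysin sequence of $\pi$ contains the exact segment
\[
H^0(\mathbb{S}^2)\xrightarrow{\ \cup e\ }H^2(\mathbb{S}^2)\xrightarrow{\ \pi^*\ }H^2(S^*\mathbb{S}^2)\longrightarrow H^1(\mathbb{S}^2)=0,
\]
so that $\ker\pi^*=\operatorname{im}(\cup e)=2\mathbb Z\subset\mathbb Z=H^2(\mathbb{S}^2)$. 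In particular $\pi^*$ carries the generator to the unique nonzero (order two) element of $H^2(S^*\mathbb{S}^2)\cong\mathbb Z/2$; hence $f^{(\pm)}$ induces a nonzero map on $H^2$, and Theorem~\ref{main theorem 1} forbids a smooth global eigenvector.

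I expect the main obstacle to be the middle step --- verifying that $f^{(\pm)}$ is, up to homotopy, the unit (co)tangent bundle projection --- since it requires matching the abstract identification $\CP^1=\mathbb{S}^2$ used in Theorem~\ref{main theorem 1} (via the Hopf bundle \eqref{E:bundle}) with the concrete Bloch-sphere description of the eigenlines, and keeping the orientation/antipodal bookkeeping straight; the identification $S^*\mathbb{S}^2\cong\RP^3$ and the Gysin computation are routine. It would also be worth remarking that, in contrast with the examples of Section~\ref{Local obstructions}, here $d=2$, so by Theorem~\ref{main theorem 2} there is \emph{no} local obstruction: a single-fibre ``hairy ball'' argument as in Propositions~\ref{proposition Dirac 3D} and~\ref{proposition local obstructions curl} cannot succeed, and the obstruction exhibited above is genuinely global.
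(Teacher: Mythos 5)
Your proposal is correct and follows essentially the same route as the paper's own proof: both identify $f^{(\pm)}$ on $S^*\mathbb{S}^2\cong\RP^3$ with the (co)tangent circle-bundle projection via $\operatorname{tr}(s^\alpha P^{(\pm)})=\pm\boldsymbol{\xi}^\alpha$, compute $H^2(\RP^3)=\mathbb Z/2$, and use the Gysin sequence to see that the induced map on $H^2$ is the nonzero mod~2 reduction, so Theorem~\ref{main theorem 1} applies. Your closing remark about the absence of a local obstruction for $d=2$ also matches the paper.
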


\begin{proof}
\textcolor{black}{Let $S^*(\mathbb{S}^2) \subset T^*\mathbb{S}^2\setminus\{0\}$ be the unit sphere bundle cut out by the equation}
\begin{equation} \label{example 2D Dirac equation 6}
\textcolor{black}{(\boldsymbol{\xi}_1)^2+(\boldsymbol{\xi}_2)^2+(\boldsymbol{\xi}_3)^2\, =1.}
\end{equation}
One can easily see that the eigenvalues of \eqref{example 2D Dirac equation 3} \textcolor{black}{on $S^*(\mathbb{S}^2)$} are
$h^{(\pm)}(\mathbf{x},\boldsymbol{\xi})=\pm 1$. Let $P^{(\pm)}(\mathbf{x},\boldsymbol{\xi})$ be the eigenprojections of $W_\mathrm{prin}(\mathbf{x}, \boldsymbol{\xi})$ corresponding to the eigenvalues $\pm 1$. Then a straightforward calculation shows that the maps \eqref{E:cp} are given by
\begin{equation}\label{E:two maps}
T^*\mathbb{S}^2\setminus\{0\} \longrightarrow \CP^1 = \mathbb {S}^2, \qquad (\mathbf{x}, \boldsymbol{\xi})\mapsto \operatorname{tr}(s^\alpha P^{(\pm)}(\mathbf{x},\boldsymbol{\xi}))=\pm\boldsymbol{\xi}^\alpha,
\end{equation}
supplemented by conditions \eqref{example 2D Dirac equation 4}--\eqref{example 2D Dirac equation 6}. Because of the symmetry between $\mathbf{x}$ and $\boldsymbol{\xi}$, \textcolor{black}{the maps \eqref{E:two maps} can be viewed as the bundle projection $f: S^* (\mathbb{S}^2) \longrightarrow \mathbb{S}^2$. By identifying $S^*(\mathbb{S}^2)$ with the real projective space $\RP^3$, we conclude that $H^2(S^* (\mathbb{S}^2)) = H^2 (\RP^3) = \mathbb Z/2$ and the induced map $f^*: H^2 (\mathbb S^2) \longrightarrow H^2 (S^*(\mathbb{S}^2))$ in cohomology is the mod 2 homomorphism $f^*: \mathbb Z \to \mathbb Z/2$. The latter can be seen from the Gysin exact sequence of the circle bundle $f: \RP^3 \longrightarrow \mathbb{S}^2$,}
\[
\begin{CD}
0 @>>> H^0 (\mathbb{S}^2) @>>> H^2 (\mathbb{S}^2) @> p^* >> H^2 (\RP^3) @>>> 0.
\end{CD}
\]

\smallskip\noindent
\textcolor{black}{Since $S^*(\mathbb{S}^2)$ is a deformation retract of $T^*\mathbb{S}^2\setminus\{0\}$ , it follows from Theorem \ref{main theorem 1} that we have a non-trivial obstruction to the existence of a smooth eigenvector $v^{(j)}(\mathbf{x}, \boldsymbol{\xi})$ for all $(\mathbf{x}, \boldsymbol{\xi}) \in T^*\mathbb{S}^2\setminus\{0\}$. Note that $d = 2$ in this case, hence the existence of $v^{(j)}(\mathbf{x}, \boldsymbol{\xi})$ for any fixed $\mathbf{x} \in \mathbb{S}^2$ is unobstructed by Theorem \ref{main theorem 2}.}
\end{proof}

%

\subsection{An artificial example}
\label{An artificial example}

In the same setting and with the same notation as in the previous subsection,  let us define
\begin{equation*}
\label{artificial example equation 1}
P_+(\mathbf{x},\boldsymbol{\xi})
:=
\frac{1}{2}
(
s_\alpha\mathbf{x}^\alpha
+
\textcolor{black}{E}
),
\end{equation*}
\begin{equation*}
\label{artificial example equation 2}
P_-(\mathbf{x},\boldsymbol{\xi})
:=
-
\frac{1}{2}
(
s_\alpha\mathbf{x}^\alpha
-
\textcolor{black}{E}
),
\end{equation*}
where $\textcolor{black}{E}$ is the $2\times 2$ identity matrix.  Using elementary properties of Pauli matrices it is easy to see that
\begin{equation*}
\label{artificial example equation 2bis}
P_+(\mathbf{x},\boldsymbol{\xi})P_-(\mathbf{x},\boldsymbol{\xi})=0 \quad\text{\textcolor{black}{and}}\quad [P_\pm(\mathbf{x},\boldsymbol{\xi})]^2=P_\pm(\mathbf{x},\boldsymbol{\xi}).
\end{equation*}

\noindent
Let us consider an elliptic pseudodifferential operator $A$ of order $s$ on $\mathbb{S}^2$ with principal symbol
\begin{equation}
\label{artificial example equation 3}
A_\mathrm{prin}(\mathbf{x},\boldsymbol{\xi})
:=
\|\boldsymbol{\xi}\|^s
[
c_+
P_+(\mathbf{x},\boldsymbol{\xi})
+
c_-
P_-(\mathbf{x},\boldsymbol{\xi})
],
\qquad c_+\ne c_-\,,
\qquad c_\pm\ne0.
\end{equation}

\begin{proposition}
It is impossible to choose eigenvectors $v^{(j)}(\mathbf{x},\boldsymbol{\xi})$, $j\in J$,  of \eqref{artificial example equation 3} smoothly for all $(\mathbf{x},\boldsymbol{\xi})$ satisfying  \eqref{example 2D Dirac equation 4}--\eqref{example 2D Dirac equation 6}.
\end{proposition}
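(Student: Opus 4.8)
The plan is to follow, essentially verbatim, the argument of subsection~\ref{Restriction of the massless Dirac operator to the 2-sphere}, with the rôle of the cotangent variable $\boldsymbol{\xi}$ there played here by the base variable $\mathbf{x}$.

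First I would record the elementary structure of $A_\mathrm{prin}$. Since $\mathbf{x}\in\mathbb{S}^2$, the Hermitian matrix $s_\alpha\mathbf{x}^\alpha$ satisfies $(s_\alpha\mathbf{x}^\alpha)^2=E$ and therefore has eigenvalues $\pm1$; the matrices $P_\pm(\mathbf{x},\boldsymbol{\xi})$ are precisely its (rank one, $\boldsymbol{\xi}$-independent) spectral projections onto the $\pm1$ eigenspaces. Consequently $A_\mathrm{prin}$ has the eigenvalues $c_\pm\|\boldsymbol{\xi}\|^s$, which are simple because $c_+\ne c_-$ and $c_\pm\ne0$, so Assumptions~\ref{assumption principal symbol} and~\ref{assumption simple eigenvalues} are in force (the former requires, e.g., $c_\pm\in\mathbb{R}$), and the eigenprojections $P^{(\pm)}$ of $A_\mathrm{prin}$ coincide with $P_\pm$ irrespective of the value of $c_\pm$.

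Next I would compute the maps \eqref{E:cp}. Using the Bloch-sphere identification $\CP^1=\mathbb{S}^2$ sending a rank one projection $P$ to the unit vector $(\operatorname{tr}(s^\alpha P))_{\alpha=1,2,3}$, together with $\operatorname{tr}(s^\beta s_\alpha)=2\delta^\beta{}_\alpha$ and $\operatorname{tr} s^\beta=0$, one finds $\operatorname{tr}(s^\alpha P^{(\pm)}(\mathbf{x},\boldsymbol{\xi}))=\pm\mathbf{x}^\alpha$. Passing to the unit cosphere bundle $S^*(\mathbb{S}^2)$ cut out by \eqref{example 2D Dirac equation 6} --- a deformation retract of $T^*\mathbb{S}^2\setminus\{0\}$ --- the map $f^{(+)}$ is then exactly the bundle projection $\pi\colon S^*(\mathbb{S}^2)\to\mathbb{S}^2$ of the previous subsection, while $f^{(-)}=a\circ\pi$ with $a\colon\mathbb{S}^2\to\mathbb{S}^2$ the antipodal map.

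Finally I would quote the homological computation already made there: $S^*(\mathbb{S}^2)\cong\RP^3$, and the Gysin sequence of the circle bundle $\pi$ identifies $\pi^*\colon H^2(\mathbb{S}^2)=\mathbb{Z}\to H^2(\RP^3)=\mathbb{Z}/2$ with reduction modulo $2$, which is nonzero; since $a^*$ is an automorphism of $H^2(\mathbb{S}^2)$, the pullback $(f^{(-)})^*=\pi^*\circ a^*$ is nonzero as well. Hence for neither sign does \eqref{E:cp} induce the zero map on $H^2$, and Theorem~\ref{main theorem 1} furnishes the asserted obstruction. I expect no genuine difficulty here; the one point deserving care is the verification that the eigenprojections are independent of $\boldsymbol{\xi}$ and equal to $P_\pm$, which is exactly what reduces \eqref{E:cp} to the already-understood sphere-bundle projection.
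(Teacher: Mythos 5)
Your proposal is correct and follows essentially the same route as the paper: you identify $P_\pm$ as the $\boldsymbol{\xi}$-independent rank-one spectral projections of $s_\alpha\mathbf{x}^\alpha$, compute that the maps \eqref{E:cp} send $(\mathbf{x},\boldsymbol{\xi})\mapsto\pm\mathbf{x}^\alpha$, and then reuse the Gysin-sequence computation for $S^*(\mathbb{S}^2)\cong\RP^3$ from subsection~\ref{Restriction of the massless Dirac operator to the 2-sphere}, exactly as the paper does by noting that one now projects to position rather than momentum. Your write-up is in fact somewhat more detailed than the paper's (which simply says ``the same proof applies''), and the extra care about the antipodal map for $f^{(-)}$ and the reality of $c_\pm$ is harmless and correct.
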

\begin{proof}
Arguing as in the proof of Proposition~\ref{proposition 2D dirac}, we obtain that the maps \eqref{E:cp} are given by
\begin{equation*}
T^*\mathbb{S}^2\setminus\{0\} \longrightarrow \CP^1 = \mathbb{S}^2, \qquad (\mathbf{x}, \boldsymbol{\xi})\mapsto \operatorname{tr}(s^\alpha P_\pm(\mathbf{x},\boldsymbol{\xi}))=\pm\mathbf{x}^\alpha,
\end{equation*}
supplemented by conditions \eqref{example 2D Dirac equation 4}--\eqref{example 2D Dirac equation 6}. \textcolor{black}{These maps, once again, arise from} the cotangent bundle projection $T^* \mathbb{S}^2 \setminus \{0\} \longrightarrow \mathbb{S}^2$, only we now project to `position', as opposed to `momentum'. The same proof applies.
\end{proof}

\section{\color{black}Examples: neither local nor global obstructions}
\label{Neither local nor global obstructions}

\subsection{Linear elasticity in 2D}
\label{Linear elasticity}

Let $M$ be the 2-torus $\mathbb{T}^2$ endowed with a Riemannian metric $g$.  The operator of linear elasticity $L$ on vector fields
is defined in accordance with
\begin{equation}
\label{L}
(Lv)^\alpha=-\mu(\nabla_\beta \nabla^\beta v^\alpha+\operatorname{Ric}^\alpha{}_\beta \,v^\beta)-(\lambda+\mu)\nabla^\alpha\nabla_\beta v^\beta,
\end{equation}
where $\nabla$ is the Levi-Civita connection,  $\operatorname{Ric}$ is the Ricci tensor, and the \textcolor{black}{real} scalars $\lambda$ and $\mu$ are the \emph{Lam\'e parameters}.  The Lam\'e parameters are assumed to satisfy the conditions
\begin{equation}
\label{strong convexity}
\mu>0, \qquad \lambda+\mu>0,
\end{equation}
which guarantee strong convexity;  see for instance \cite{miyanishi}.  Formula \eqref{L} is obtained by performing an integration by parts in the identity
\begin{equation*}
\label{definition of L variational form}
\frac12 \int_M g_{\alpha\beta}\,v^\alpha (Lv)^\beta \rho\, dx=E(v),
\end{equation*}
$E(v)$ being the potential energy of elastic deformation
\begin{equation*}
\label{potential energy of elastic deformation}
E(v):=\frac{1}{2}\int_M \left(\lambda(\nabla_\alpha v^\alpha)^2+\mu(\nabla_\alpha v_\beta+\nabla_\beta v_\alpha)\nabla^\alpha v^\beta \right) \rho\,dx
\end{equation*}
and $\rho(x):=\sqrt{\operatorname{det}g_{\alpha\beta}(x)}$ being the Riemannian density.  In the presence of a boundary, the latter supplies appropriate boundary conditions. A detailed derivation can be found, for example, in \cite{MR0106584,diffeo}.

The operator $L$, which acts on 2-vectors, can be turned into an operator acting on $2$-columns of scalar functions as follows.  

\textcolor{black}{Recall that the torus $\mathbb{T}^2$ is parallelizable. Choose a global orthonormal framing $e_j$, $j=1,2$, on $\mathbb{T}^2$ and put}
\begin{equation*}
e^j{}_\alpha:=\delta^{jk}g_{\alpha\beta}\,e_k{}^\beta.
\end{equation*}
Define the operator $\textcolor{black}{\mathbf{S}}$,
\begin{equation*}
(\textcolor{black}{\mathbf{S}}v)^j:=e^j{}_\alpha v^\alpha\,,
\end{equation*}
which maps 2-vectors to 2-columns of scalar functions.
The operator of linear elasticity acting on 2-columns of scalar functions is defined as
\begin{equation*}
\label{L on half densities}
L_{\mathrm{scal}}:=\textcolor{black}{\mathbf{S}}L\textcolor{black}{\mathbf{S}}^{-1}.
\end{equation*}

A straightforward calculation gives
\begin{equation}
\label{L principal}
(L_{\mathrm{scal}})_\mathrm{prin}=\mu h^2\,I+(\lambda+\mu)h^2\, qq^{\top},
\end{equation}
where
\begin{equation}
\label{h and p}
h(x,\xi):=\sqrt{g^{\alpha\beta}(x)\xi_\alpha\xi_\beta}\,,
\qquad
q(x,\xi):=\frac{1}{h(x,\xi)}\begin{pmatrix}
e_1{}^\alpha(x)\, \xi_\alpha\\
e_2{}^\alpha(x)\, \xi_\alpha
\end{pmatrix}.
\end{equation}
Analysing \eqref{L principal} we conclude that the eigenvalues of $(L_{\mathrm{scal}})_\mathrm{prin}$ are
\begin{equation}
\label{eigenvalue L prin}
h^{(1)}=\mu h^2, \qquad h^{(2)}=(\lambda+2\mu)h^2
\end{equation}
and the corresponding orthonormalised eigenvectors are
\begin{equation}
\label{eigenvectors L prin}
v^{(1)}=\epsilon \,q\,, \qquad v^{(2)}=q.
\end{equation}
Recall that $\epsilon$ is defined in accordance with \eqref{example 3D Dirac equation 2}. Note that the eigenvalues \eqref{eigenvalue L prin} are simple; indeed,  conditions \eqref{strong convexity} imply $h^{(2)}/h^{(1)}>1$.

It ensues that $L_{\mathrm{scal}}$ satisfies \textcolor{black}{Assumptions \ref{assumption principal symbol} and \ref{assumption simple eigenvalues}} from Section~\ref{Statement of the problem}. Formulae \eqref{h and p} and \eqref{eigenvectors L prin} imply that $v^{(1)}(x,\xi)$ and $v^{(2)}(x,\xi)$ are smoothly defined for all $(x,\xi)\in T^*(\mathbb{T}^2)\setminus \{0\}$.

\begin{remark}
Let us point out that this is not a trivial example: there exist systems of two equations on $\mathbb{T}^2$ topologically obstructed as per Theorem~\ref{main theorem 1}. Indeed,  there exist maps $S^* (\mathbb T^2) = \mathbb T^3 \to \CP^1$ that induce non-zero homomorphisms $H^2 (\CP^1) \to H^2 (\mathbb T^3)$. To obtain an example, simply compose the projection map $\mathbb T^3 = \mathbb T^2 \times \mathbb S^1 \longrightarrow \mathbb T^2$ with any map $\mathbb T^2 \longrightarrow \textcolor{black}{\CP^1}$ of degree one. 
\end{remark}

%


\subsection{The Neumann--Poincar\'e operator for linear elasticity in 3D}
\label{NP example}

Let $\mathcal{D}$ be \textcolor{black}{a} bounded connected domain of $\mathbb{R}^3$ with smooth closed boundary $M$ and let $g$ be the Riemannian metric on $M$ induced by the standard Euclidean metric on $\mathbb{R}^3$.  We denote by $\textbf{x}=(\textbf{x}^1,\textbf{x}^2,\textbf{x}^3)$ the standard Euclidean coordinates in $\mathbb{R}^3$.

The operator of linear elasticity $L$ acting on vector fields in $\mathcal{D}$  is defined in accordance with
\begin{equation*}
\label{L NP}
(Lv)^\alpha:=-\mu\,\partial_\beta \partial^\beta v^\alpha-(\lambda+\mu)\partial^\alpha\partial_\beta v^\beta,
\end{equation*}
where the scalars $\lambda$ and $\mu$, assumed to satisfy the conditions
\begin{equation*}
\label{strong convexity NP}
\mu>0, \qquad \lambda+\frac23\mu>0,
\end{equation*}
are the Lam\'e parameters. Compare with \eqref{L} and \eqref{strong convexity}.

The \emph{Kelvin matrix}
\begin{equation*}
\label{Kelvin matrix}
[\mathcal{K}(\textbf{x},\textbf{y})]^\alpha{}_\beta:=\frac{\lambda+3\mu}{4\pi\, \mu(\lambda+2\mu)} \frac{\delta^\alpha{}_\beta}{|\textbf{x}-\textbf{y}|}+\frac{\lambda+\mu}{4\pi\,\mu(\lambda+2\mu)} \frac{(\textbf{x}-\textbf{y})^\alpha(\textbf{x}-\textbf{y})_\beta}{|\textbf{x}-\textbf{y}|^3}
\end{equation*}
is related \textcolor{black}{to} the fundamental solution $\mathcal{E}$ of $L$ as
\begin{equation*}
\label{fundamental solution L}
\mathcal{E}(\textbf{x},\textbf{y})=\frac12\, \mathcal{K}(\textbf{x},\textbf{y}),
\end{equation*}
see~\cite[Eqns.~(1.23) and~(1.28)]{agranovich} (note that the operator of linear elasticity in \cite{agranovich} is defined to be $-L$).

We define the \emph{Neumann--Poincar\'e operator} to be the zeroth order pseudodifferential operator acting on vector fields on $M$ by the formula
\begin{equation}
\label{NP definition}
[\mathcal{B}v(\textbf{x})]^\alpha:=\int_M  \delta^{\alpha\mu}\,\delta_{\beta\nu}\,[T(\textbf{y}, \partial_{\textbf{y}})\mathcal{E}(\textbf{x},\textbf{y})]^\nu{}_\mu \,v^\beta(\textbf{y})\, dS, \qquad \textbf{x}\in M,
\end{equation}
with
\begin{equation*}
\label{traction}
T: u^\alpha \mapsto [T(\textbf{x}, \partial_{\textbf{x}})u(\textbf{x})]^\alpha:=\lambda n^\alpha(\textbf{x}) \partial_\beta u^\beta(\textbf{x})  +\mu\left(n^\beta(\textbf{x})  \partial_\beta u^\alpha(\textbf{x})  + n_\beta(\textbf{x})  \partial^\alpha u^\beta(\textbf{x}) \right)
\end{equation*}
known as the traction. Here $n$ denotes the outer unit normal vector field on $M$. The operator $\mathcal{B}$ is a singular integral operator, and \textcolor{black}{the integral in} formula \eqref{NP definition} is to be understood in the sense of Cauchy principal value. Note that $\mathcal{B}$ is neither elliptic nor self-adjoint in $L^2(M)$ \cite{miyanishi,NP}.

\begin{remark}
Let us point out that,  in this example, the tangent bundle $TM$ is not necessarily trivial. Hence,  this doesn't fully align with the framework set out in the beginning of the paper.  Nevertheless,  we analyse the issue of obstructions for the Neumann--Poincar\'e operator in this slightly more general setting because of its importance in applications.
\end{remark}

Let $x=(x^1, x^2)$ be an arbitrary \textcolor{black}{local} coordinate system on $M$.  Given a point
$\textbf{x}\textcolor{black}{\in\mathcal{D}}$ in a neighbourhood of $M$, we define $x^3(\textbf{x}):=\operatorname{dist}(\textbf{x}, M)$ to be its distance to $M$ and $x(\textbf{x})=\Pi_M(\textbf{x})$ to be its orthogonal projection onto $M$.  Then $(x=(x^1, x^2), x^3)$ defines a coordinate system in a neighbourhood of $M$. In this coordinate system, the principal symbol of $\mathcal{B}$ reads \cite[Eqn.~(1.89)]{agranovich}\footnote{Note that formula (1.89) in \cite{agranovich} has the opposite sign, because the authors there started from the operator $-L$ as opposed to $L$.}
\begin{equation}
\label{NP principal symbol}
\mathcal{B}_\mathrm{prin}(x,\xi)=-\frac{i\mu}{2(\lambda+2\mu)}\frac{1}{\sqrt{g^{\mu\nu}(x)\,\xi_\mu\xi_\nu}}
\begin{pmatrix}
0&-g^{\alpha\gamma}(x)\,\xi_\gamma\\
\xi_\beta&0
\end{pmatrix}, \qquad (x,\xi)\in T^*M\setminus\{0\}.
\end{equation}
The zero in the upper-left corner \textcolor{black}{of the matrix in} \eqref{NP principal symbol} is a $2\times 2$ block of zeros.
The principal symbol \eqref{NP principal symbol} acts on quantities of the form
\begin{equation*}
\begin{pmatrix}
w\\
f
\end{pmatrix},
\end{equation*}
where $w$ is a vector field on $M$ and $f$ is a scalar field on $M$.
A straightforward calculation shows that the eigenvalues of \eqref{NP principal symbol} are
\begin{equation}
\label{NP eigenvalues principal symbol}
h^{(0)}(x,\xi)=0, \qquad h^{(\pm)}(x,\xi)=\pm \frac{\mu}{2(\lambda+2\mu)}\,.
\end{equation}

\begin{theorem}
One can choose linearly independent orthonormal eigenvectors $v^{(0)}(x,\xi)$ and $v^{(\pm)}(x,\xi)$ of \eqref{NP principal symbol} corresponding to the eigenvalues \eqref{NP eigenvalues principal symbol} smoothly for all $(x,\xi)\in T^*M\setminus\{0\}$.
\end{theorem}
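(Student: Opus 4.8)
The plan is to write the three eigenvectors down explicitly; no topological input is needed beyond the fact that $M$ is orientable, and consequently neither the local obstruction of Theorem~\ref{main theorem 2} nor the global obstruction of Theorem~\ref{main theorem 1} will be present. First I would put the $3\times3$ principal symbol \eqref{NP principal symbol} into a convenient invariant form. Set $c:=\mu/(2(\lambda+2\mu))>0$, $h(x,\xi):=\sqrt{g^{\mu\nu}(x)\,\xi_\mu\xi_\nu}$, $\hat\xi_\alpha:=\xi_\alpha/h$, and let $\eta^\alpha:=g^{\alpha\gamma}(x)\,\xi_\gamma/h$ be the unit tangent vector metric-dual to $\hat\xi$, so that $\hat\xi_\alpha\eta^\alpha=1$ and $g_{\alpha\beta}\eta^\alpha\eta^\beta=1$. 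In these terms \eqref{NP principal symbol} acts on a pair $(w,f)$ consisting of a complexified tangent vector $w$ and a scalar $f$ by $(w,f)\mapsto\bigl(i\,c\,f\,\eta,\ -i\,c\,\hat\xi_\beta w^\beta\bigr)$, where the first entry is the vector component and the second the scalar component of the output.

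From this formula the eigenstructure is read off directly. The kernel consists of the pairs $(w,0)$ with $w$ orthogonal to $\eta$: indeed $i\,c\,f\,\eta=0$ forces $f=0$ (as $c\neq0$ and $\eta\neq0$), and then $\hat\xi_\beta w^\beta=g(w,\eta)=0$. Since $M=\partial\mathcal{D}$ is canonically oriented, the fibrewise rotation by $\pi/2$ in $T^*M$ determined by $g$ and this orientation produces a globally defined smooth unit tangent vector orthogonal to $\eta$, and I would take $v^{(0)}(x,\xi)$ to be this vector with vanishing scalar component. For the eigenvalue $\pm c$ the defining relations reduce, using $\hat\xi_\alpha\eta^\alpha=1$, to $w=\pm i\,\eta\,f$, which forces $w$ proportional to $\eta$; a one-line computation then gives the normalised eigenvectors $v^{(\pm)}(x,\xi)=\tfrac1{\sqrt2}\,(\eta,\ \mp i)$. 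Each of $v^{(0)},v^{(+)},v^{(-)}$ is manifestly smooth on $T^*M\setminus\{0\}$ --- only division by the nowhere-vanishing $h$ and the globally defined quarter-turn are involved --- and a short check of Hermitian inner products, with the metric $g$ on the vector component, shows that the three vectors are orthonormal, hence in particular linearly independent.

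What remains is routine and I would relegate it to one or two short displays: substituting $v^{(0)}$ and $v^{(\pm)}$ back into the action formula to confirm the eigenvalues $0$ and $\pm c$ from \eqref{NP eigenvalues principal symbol}, and verifying the orthonormality relations. The one point deserving a word of care is the \emph{global} nature of $v^{(0)}$: the range of the eigenprojection $P^{(0)}$ is an a priori unidentified complex line bundle over $T^*M\setminus\{0\}$, but the orientation of $M$ --- equivalently, the outer unit normal already used to set up the operator $\mathcal{B}$ in \eqref{NP definition} --- furnishes an explicit nowhere-zero section of it, so no obstruction arises. Equivalently, in the language of Theorem~\ref{main theorem 1}, the maps \eqref{E:cp} attached to $h^{(0)}$ and to $h^{(\pm)}$ admit the explicit smooth lifts just constructed, so the pull-back of the Euler class $e$ vanishes trivially, and with it every local and global obstruction.
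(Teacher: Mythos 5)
Your proposal is correct and follows essentially the same route as the paper: explicit global eigenvectors for the eigenvalues $\pm\mu/(2(\lambda+2\mu))$ (your $\tfrac1{\sqrt2}(\eta,\mp i)$ is the paper's $n(x)\pm i\xi$ written in the $(w,f)$ splitting), and the orientation-induced quarter-turn in $T_xM$ (equivalently, the complex structure the paper invokes) to fix $v^{(0)}$ consistently. Your version merely spells out the linear algebra and the orthonormality check more explicitly; no substantive difference.
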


\begin{proof}
Since $\dim M\ne 3$, there are no local obstructions. \textcolor{black}{To sort out global obstructions, we will work with the unit sphere bundle $S^*M$, which is a deformation retract of $T^*M\setminus \{0\}$. By direct inspection, the eigenvectors $v^{(\pm)}$ corresponding to the nonzero eigenvalues give rise to the maps}
\begin{equation}
\label{proof NP equation 1}
f^{(\pm)}:S^*M \longrightarrow \CP^2, \qquad (x,\xi) \mapsto n(x)\pm i\xi,
\end{equation}
Formula \eqref{proof NP equation 1}, in fact, implies that one has well defined smooth eigenvectors in $\mathbb{S}^5$ and not just in $\CP^2$, which yields global existence.

Let us now examine \textcolor{black}{global} obstructions for the eigenvector $v^{(0)}$ with zero eigenvalue. Observe that $v^{(0)}$ can be chosen to be real hence $v^{(0)}$ gives rise to the map 
\[
f^{(0)}: S^*M \longrightarrow \RP^2
\]
sending $(x,\xi)$ to the line $L \subset T_x M$ perpendicular to $\xi \in T_x M$. A choice of complex structure\footnote{Here we are using the fact that an oriented closed two-dimensional surface is a Riemann surface,  hence it admits a complex structure, see \cite[Subsection~2.1]{donaldson}.} on $M$ gives us a preferred direction of rotation in each tangent plane $T_x M$, and hence a \textcolor{black}{consistent} choice of a specific unit vector on the line $L$. 
\end{proof}

\section{Pseudodifferential operators with multiplicities}
\label{Pseudodifferential operators with multiplicities}
In conclusion, we wish to mention that there are many pseudodifferential operators whose principal symbols have multiple eigenvalues. The list of such operators includes the Neumann--Poincar\'e operator in higher dimensions, the operator of linear elasticity in dimensions three and higher,  the signature operator,  Dirac operators in higher dimensions etc.  It would be interesting to investigate the diagonalisation question for these operators; here is a quick outline.

An eigenvalue of multiplicity $k \ge 1$ leads as before to a well-defined map $T^*M \setminus\{0\} \to \Gr_k (\mathbb C^m)$ to the Grassmanian of $k$-dimensional complex planes in $\mathbb C^m$. This map needs to be lifted to the canonical bundle $\V_k (\mathbb C^m) \to \Gr_k (\mathbb C^m)$, where $\V_k (\mathbb C^m)$ stands for the Stiefel manifold of $k$-frames in $\mathbb C^m$. The case $k = 1$ corresponds to the map \eqref{E:cp} and the canonical bundle \eqref{E:bundle} because  $\V_1 (\mathbb C^m) = \mathbb S^{2m-1}$ and $\Gr_1 (\mathbb C^m) = \CP^{m-1}$. 

The lifting problem at hand is obstructed by the higher Chern classes $c_1,\ldots, c_k$. This is consistent with the $k = 1$ case because the first Chern class of a complex line bundle coincides with the Euler class of the same bundle viewed as an oriented 2-plane bundle. Unlike in the $k = 1$ case, however, the Chern classes do not provide in general a full set of obstructions: there exist non-trivial complex bundles all of whose Chern classes vanish. A simple example of that is the $U(2)$ bundle over $\mathbb S^5$ with the clutching function $\mathbb S^4 \to U(2)$ representing the non-trivial element in $\pi_4 (U(2)) = \mathbb Z/2$.

The above discussion illustrates that the case of operators with multiplicities is quite different: one would not be able to obtain as sharp results in full generality (see also \cite{grove_pedersen,friedman_park} \textcolor{black}{and Remark~\ref{remark smoothness eigenvalues}}), and more of a \textcolor{black}{case-by-case} analysis would be required.  Hence,  we refrain from analysing operators with multiplicities in this paper.

\section*{Acknowledgements}
\addcontentsline{toc}{section}{Acknowledgements}

MC was partially supported by the Leverhulme Trust Research Project Grant RPG-2019-240 and by a Heilbronn Small Grant (via the UKRI/EPSRC Additional Funding Programme for Mathematical Sciences); both are gratefully acknowledged. GR was supported by a grant from Ministry of Science and Higher Education of RF, Agreement  075-15-2022-287. NS was partially supported by NSF Grant DMS-1952762.

\end{document}